\def\bu{\bullet}
\def\marker{\>\hbox{${\vcenter{\vbox{
    \hrule height 0.4pt\hbox{\vrule width 0.4pt height 6pt
    \kern6pt\vrule width 0.4pt}\hrule height 0.4pt}}}$}\>}
\def\gpic#1{#1
     \smallskip\par\noindent{\centerline{\box\graph}} \medskip}
\documentclass[12pt]{article}
\usepackage{fullpage,amsthm,amssymb,amsmath}
\usepackage{graphicx,algorithmic,algorithm,amsfonts}
\usepackage{enumerate,color}
\usepackage{tikz}

\newtheorem{thm}{Theorem}[section]
\newtheorem{lem}[thm]{Lemma}
\newtheorem{prop}[thm]{Proposition}
\newtheorem{cor}[thm]{Corollary}

\theoremstyle{definition}
\newtheorem{example}[thm]{Example}
\newtheorem{definition}[thm]{Definition}
\newcommand\GG{\mathcal{G}}
\newcommand\NN{\mathbb{N}}

\def\qed{\hfill\ifhmode\unskip\nobreak\fi\quad\ifmmode\Box\else\hfill$\Box$\fi}

\makeatletter
\makeatother
%

\def\FR#1#2{\frac{#1}{#2}}
\def\CL#1{\lceil{#1}\rceil}

\def\esub{\subseteq}

\def\VEC#1#2#3{#1_{#2},\ldots,#1_{#3}}

\begin{document}

\title{Largest 2-regular subgraphs in 3-regular graphs}

\author{
Ilkyoo Choi\thanks{
Department of Mathematics, Hankuk University of Foreign Studies, Yongin-si,
Gyeonggi-do, Republic of Korea, \texttt{ilkyoo@hufs.ac.kr}.
Supported by the Basic Science Research Program through
the National Research Foundation of Korea (NRF) funded by the Ministry of
Education (NRF-2018R1D1A1B07043049), and also by Hankuk University of Foreign
Studies Research Fund.
}
\and
Ringi Kim\thanks{
Department of Mathematical Sciences, KAIST, Daejeon, Republic of Korea,
\texttt{ringikim2@gmail.com}.
Supported by the National Research Foundation of Korea (NRF)
grant funded by the Korea government (MSIT)(NRF-2018R1C1B6003786)
}
\and
Alexandr Kostochka\thanks{
Department of Mathematics, University of Illinois at Urbana--Champaign, Urbana,
IL, USA and Sobolev Institute of Mathematics, Novosibirsk, Russia,
\texttt{kostochk@math.uiuc.edu}.
Supported by NSF grants DMS1600592 and grants 18-01-00353A and
16-01-00499 of the Russian Foundation for Basic Research.
}
\and
Boram Park\thanks{
Department of Mathematics, Ajou University, Suwon-si, Gyeonggi-do,
Republic of Korea, \texttt{borampark@ajou.ac.kr}.
Supported by the Basic Science Research Program through the National Research
Foundation of Korea (NRF) funded by the Ministry of Science, ICT and
Future Planning (NRF-2018R1C1B6003577).
}
\and
Douglas B. West\thanks{
Mathematics Departments, Zhejiang Normal University, Jinhua, China and
University of Illinois at Urbana--Champaign, Urbana, IL, USA,
\texttt{dwest@math.uiuc.edu}.
Supported by National Natural Science Foundation of China grant NSFC-11871439
and Recruitment Program of Foreign Experts, 1000 Talent Plan, State
Administration of Foreign Experts Affairs, China.
}
}

\date\today

\maketitle

\begin{abstract}
For a graph $G$, let $f_2(G)$ denote the largest number of vertices in a
$2$-regular subgraph of $G$.  We determine the minimum of $f_2(G)$ over
$3$-regular $n$-vertex simple graphs $G$.  To do this, we prove that every
$3$-regular multigraph with exactly $c$ cut-edges has a $2$-regular subgraph
that omits at most $\max\{0,\lfloor (c-1)/2\rfloor\}$ vertices.
More generally, every $n$-vertex multigraph with maximum degree $3$ and
$m$ edges has a $2$-regular subgraph that omits at most
$\max\{0,\lfloor (3n-2m+c-1)/2\rfloor\}$ vertices.  These bounds
are sharp; we describe the extremal multigraphs.

\medskip\noindent
{\bf{Mathematics Subject Classification:}}  05C07, 05C70, 05C35.\\
{\bf{Keywords:}} factors in graphs, cubic graphs, cut-edges.
\end{abstract}

\section{Introduction}
For $\ell\in\NN$, an {\it $\ell$-factor} in a graph or multigraph is an
$\ell$-regular spanning subgraph.  Let $f_i(G)$ denote the maximum number of
vertices in an $i$-regular subgraph of $G$.  A graph or multigraph is 
{\it cubic} if every vertex has degree $3$.

A classical theorem by Petersen~\cite{1891Pe} says that every cubic multigraph
with at most two cut-edges has a $2$-factor and (equivalently) a $1$-factor.
Thus $f_1(G)=f_2(G)=|V(G)|$ when $G$ is $3$-regular and has at most two cut-edges.  In this paper, we extend
this result on $f_2(G)$ to the setting where there are more cut-edges and also
to the setting of maximum degree $3$.

For a $(2r+1)$-regular graph $G$ with $n$ vertices, Henning and Yeo~\cite{HY}
proved $f_1(G)\ge n-r\FR{(2r-1)n+2}{(2r+1)(2r^2+2r-1)}$ (while studying
matchings), and this is sharp.
The formula reduces to $(8n-2)/9$ for $3$-regular graphs.
O and West~\cite{2010OWe} gave a short proof of the Henning--Yeo result
using the notion of a {\it balloon} in a graph, which they defined to be a
maximal $2$-edge-connected subgraph incident to exactly one cut-edge.

We use balloons to study the minimum of $f_2(G)$ when $G$ is $3$-regular with
$n$ vertices.  For $3$-regular graphs, the notion of balloon has a simpler
equivalent description: a graph obtained from a $2$-edge-connected $3$-regular
graph by subdividing one edge.

In order to solve the problem, we consider a more general question, determining
a sharp lower bound on $f_2(G)$ in terms of the number of cut-edges in $G$.
Our basic result is

\begin{thm}\label{thm:cubic}
If $G$ is a cubic $n$-vertex multigraph with $c$ cut-edges,  then
$f_2(G)\ge n-\max\{0,\left\lfloor{c-1\over 2}\right\rfloor\}$, and this bound
is sharp. 
\end{thm}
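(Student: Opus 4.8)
The plan is to run an induction that is powered entirely by Petersen's theorem, after two structural reductions. First I would dispose of the base case: if $c\le 2$ then $\left\lfloor (c-1)/2\right\rfloor=0$, and Petersen's theorem gives a $2$-factor, so $f_2(G)=n$. For the general case the key opening observation is that \emph{no $2$-regular subgraph can contain a cut-edge}, since a $2$-regular subgraph is a disjoint union of cycles and a cut-edge lies on no cycle. Hence if $G_1,\dots,G_k$ are the components obtained by deleting all $c$ cut-edges, then any $2$-regular subgraph of $G$ decomposes over them and conversely, so $f_2(G)=\sum_i f_2(G_i)$. A second easy observation organizes these components: in a cubic multigraph every vertex is incident to $0$, $1$, or $3$ cut-edges (a vertex with exactly two incident cut-edges would have its unique remaining edge forced to be a cut-edge as well, by a two-edge-disjoint-paths argument). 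Consequently each nontrivial $G_i$ is $2$-edge-connected with a set of degree-$2$ ``port'' vertices, one per incident cut-edge, and the only other possibility is a trivial component consisting of a single vertex incident to three cut-edges.

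With this in hand the whole theorem reduces to a per-component estimate, which I would isolate as the main lemma: \emph{if $H$ is either a single vertex (set $d=3$) or a $2$-edge-connected multigraph with exactly $d$ vertices of degree $2$ and all others of degree $3$, then $H$ has a $2$-regular subgraph omitting at most $\max\{0,\left\lfloor (d-1)/2\right\rfloor\}$ vertices.} Granting this, the arithmetic closes cleanly. The bridge-tree of $G$ is a tree with $k=c+1$ nodes, and since each cut-edge contributes one port to each of its two endpoints' components we have $\sum_i d_i=2c$ with every $d_i\ge 1$. Summing the lemma gives
\[
\sum_i\left(|V(G_i)|-f_2(G_i)\right)\;\le\;\sum_i\FR{d_i-1}{2}\;=\;\FR{2c-(c+1)}{2}\;=\;\FR{c-1}{2},
\]
and because the left-hand side is a nonnegative integer it is in fact at most $\left\lfloor (c-1)/2\right\rfloor$, which is exactly the desired bound. (The extremal multigraphs should be read off from the equality conditions here: every $d_i$ odd and every per-component bound tight, which forces chains of trivial components and balloons; the $d=1$ case below is precisely the balloon of the introduction.)

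I would prove the lemma by induction on $d$. The base cases $d\le 2$ say $\df(H)=0$, and they follow from Petersen via the standard fact that in a $2$-edge-connected cubic multigraph every edge lies in some perfect matching and is avoided by some perfect matching (equivalently, deleting one edge leaves a perfect matching). For $d=1$, suppressing the port gives a $2$-edge-connected cubic $H'$ with a marked edge $ab$; a perfect matching of $H'$ avoiding $ab$ yields a $2$-factor of $H'$ through $ab$, which unsuppresses to a $2$-factor of $H$. For $d=2$ with ports $u,v$, adding the edge $uv$ makes $H+uv$ $2$-edge-connected and cubic; a perfect matching containing $uv$ has a complementary $2$-factor avoiding $uv$, i.e.\ a $2$-factor of $H$. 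For the inductive step with $d\ge 3$ I would pair two ports $u,v$, set $H'=H+uv$ (a $2$-edge-connected multigraph with $d-2$ ports), and apply induction to obtain a $2$-regular subgraph $F'$ of $H'$ omitting at most $\left\lfloor (d-3)/2\right\rfloor$ vertices. If $uv\notin F'$ then $F'$ already lives in $H$ and we are done; the recursion $\df(H)\le 1+\df(H')$ that delivers $\left\lfloor (d-1)/2\right\rfloor$ is exactly what the remaining ``forced'' case must supply.

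The hard part is precisely that forced case, where $uv\in F'$. Deleting $uv$ from $F'$ leaves all of $H$ covered except that $u$ and $v$ drop to degree $1$, so $F'-uv$ is a disjoint union of cycles together with a single $u$–$v$ path; naively omitting an endpoint unravels the whole path and is far too lossy. The plan here is an alternating/augmenting argument that exploits the $2$-edge-connectivity of $H$ to reroute this leftover path into cycles at the cost of omitting at most one vertex: using the free edges at the degree-$1$ endpoints together with an ear through the $2$-edge-connected structure, one closes the path (omitting nothing) or reduces it to a configuration where a single deletion suffices. I expect the delicate points to be choosing the pair $u,v$ to lie in a favorable position (e.g.\ a common ear) so that such a reroute is always available, and managing parity so that the one extra omission is never needed twice for the same pair; the trivial single-vertex component is handled separately as the $d=3$ boundary instance where the one unavoidable omission does occur.
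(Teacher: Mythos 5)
Your global strategy---delete all cut-edges at once, reduce to a per-component lemma for the bridgeless pieces, and close with the count $\sum_i d_i=2c$ over the $c+1$ components of the bridge-tree---is sound, and it is a legitimate reorganization of the paper's argument (the paper peels off one cut-edge at a time, but arrives at the same essential reduction: all the content lives in the connected bridgeless case with $1$-deficit $d$). Your base cases $d\le 2$ also match the paper's, which likewise uses Plesn\'{\i}k-type statements about perfect matchings containing or avoiding prescribed edges. The genuine gap is exactly where you say ``the hard part is precisely that forced case.'' Adding an edge $uv$ between two ports and inducting only helps when the resulting $2$-regular subgraph $F'$ of $H+uv$ avoids $uv$; when $uv\in F'$ you are left with cycles plus one $u$--$v$ path, and the claim that $2$-edge-connectivity lets you ``reroute this leftover path into cycles at the cost of omitting at most one vertex'' is the theorem in disguise, not a step: the free edge at $u$ may lead to an internal vertex of that path, to a vertex on another cycle of $F'$, or to a vertex $F'$ omits, and no argument is given that a single omission ever suffices (omitting an endpoint unravels the whole path, as you note yourself). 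The paper avoids this entirely with two tools you do not have: for $d>3$, an Edmonds-based weighted-matching lemma (every edge-weighted $2$-edge-connected cubic multigraph has a perfect matching carrying at most $1/3$ of the total weight), applied after suppressing the degree-$2$ vertices into weighted edges, which produces in one shot a $2$-regular subgraph with at least $2m/3$ edges and hence at most $d/3<(d-1)/2$ omitted vertices; and for the boundary case $d=3$, a Tutte-set analysis of an auxiliary cubic graph obtained by attaching a balloon at each degree-$2$ vertex. Without a substitute for these, your induction on $d$ does not close.

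Two smaller points. Your parenthetical equality analysis is incomplete: the tight bridgeless components are not only single vertices and balloons but also the bipartite-based family $\GG$ of the paper, which arises exactly at $d=3$, so ``chains of trivial components and balloons'' does not describe all extremal multigraphs. And the sharpness half of the statement still requires an explicit construction (a tree with all internal degrees $3$ and a balloon attached at each leaf), which you gesture at but never exhibit.
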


We will also describe all the multigraphs that achieve equality in the
bound.  Since O and West~\cite{2010OWe} showed that a cubic $n$-vertex graph
has at most $(n-7)/3$ cut-edges, Theorem~\ref{thm:cubic} immediately yields a
lower bound on $f_2(G)$ for a cubic graph $G$ in terms of the number of
vertices alone.  It also yields a somewhat weaker guarantee for cubic
loopless multigraphs.

\begin{cor}\label{cor:cubic}
If $G$ is a cubic $n$-vertex graph, then $f_2(G)\ge\min\{n,\CL{\FR56(n+2)}\}$.
If $G$ is a cubic $n$-vertex loopless multigraph, then
$f_2(G)\ge \min\{n,\CL{\FR34(n+2)}\}$.  Both bounds are sharp.
\looseness -1
\end{cor}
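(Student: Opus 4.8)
The plan is to derive Corollary~\ref{cor:cubic} from Theorem~\ref{thm:cubic} by combining the lower bound $f_2(G)\ge n-\max\{0,\FL{(c-1)/2}\}$ with the known upper bound on the number $c$ of cut-edges. For the simple-graph case, O and West showed $c\le(n-7)/3$ whenever a cubic $n$-vertex graph has at least one cut-edge. First I would dispose of the trivial case: if $G$ has at most two cut-edges, then $\FL{(c-1)/2}=0$, so $f_2(G)=n$ and the claimed minimum $\min\{n,\CL{\frac56(n+2)}\}$ holds automatically. So I may assume $c\ge3$, whence $\max\{0,\FL{(c-1)/2}\}=\FL{(c-1)/2}$.

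The core step is a monotonicity argument: since $f_2(G)\ge n-\FL{(c-1)/2}$ and the right side decreases as $c$ grows, plugging in the largest admissible value $c=\FL{(n-7)/3}$ gives the worst case. I would substitute $c\le(n-7)/3$ into $n-\FL{(c-1)/2}$ and simplify. Writing $c-1\le(n-10)/3$ and halving, the number of omitted vertices is at most $\FL{(n-10)/6}$, so $f_2(G)\ge n-\FL{(n-10)/6}=\CL{(6n-(n-10))/6}=\CL{(5n+10)/6}=\CL{\frac56(n+2)}$. The floor/ceiling bookkeeping here is the one place that needs care, since the integrality of $c$ and the two nested floor operations must be reconciled with the single ceiling in the target bound; I would verify the estimate $n-\FL{(c-1)/2}\ge\CL{\frac56(n+2)}$ directly for the extremal $c$ by checking residues of $n$ modulo $6$, rather than manipulating nested floors abstractly.

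For the loopless-multigraph statement, the only change is that the available upper bound on the number of cut-edges is weaker: a cubic loopless multigraph can have more cut-edges than a simple cubic graph (multi-edges allow balloons on as few as two vertices once we permit doubled edges), so the bound $c\le(n-7)/3$ is replaced by a bound of the form $c\le(n-?)/2$, yielding the weaker guarantee $\CL{\frac34(n+2)}$. I would either cite or briefly re-derive this multigraph cut-edge bound using the same balloon-counting idea of O and West, then repeat the substitution-and-residue computation above with the new coefficient.

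The sharpness claim requires exhibiting, for the relevant residues of $n$, cubic (simple, respectively loopless multi-) graphs meeting each bound with equality. Here I would invoke the extremal configurations promised after Theorem~\ref{thm:cubic}: take the extremal cubic multigraphs that simultaneously attain the maximum number of cut-edges and the equality $f_2(G)=n-\FL{(c-1)/2}$, and confirm that these families realize $f_2(G)=\CL{\frac56(n+2)}$ (resp.\ $\CL{\frac34(n+2)}$) for infinitely many $n$ in each residue class. The main obstacle I anticipate is not conceptual but arithmetic: ensuring that the extremal examples for the two different inequalities (maximum cut-edges \emph{and} the Theorem~\ref{thm:cubic} equality) can be met by a single family, and that the floor/ceiling conversions are tight for every residue class, so that the stated bounds are genuinely sharp rather than off by one for some values of $n$.
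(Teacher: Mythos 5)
Your proposal is correct and follows essentially the same route the paper takes: the paper treats the corollary as an immediate consequence of Theorem~\ref{thm:cubic} combined with the O--West bound of $(n-7)/3$ on cut-edges (and its multigraph analogue), with sharpness supplied by trees whose internal vertices have degree $3$ and whose leaves carry smallest balloons, exactly as in your sketch. One small correction: a cubic loopless multigraph has no $2$-vertex balloon---the smallest such balloon has $3$ vertices (subdivide one edge of a triple edge)---which is what yields the bound $c\le(n-4)/2$ and hence the coefficient $\frac34$ in your computation.
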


Theorem~\ref{thm:cubic} is proved more simply by considering the broader class
of {\em subcubic} multigraphs, which are those having maximum degree at most
$3$.  Given an $n$-vertex multigraph $G$ with maximum degree at most $2r+1$,
the {\it $r$-deficit} of $G$ is the difference between $(2r+1)n$ and the
degree-sum of $G$, which can be computed as $(2r+1)n-2|E(G)|$.

\begin{thm}\label{thm:subcubic}
If $G$ is a subcubic $n$-vertex multigraph with $c$ cut-edges and $1$-deficit
$d$, then $f_2(G)\ge n-\max\{0,{d+c-1\over 2}\}$, and this bound is sharp.
\end{thm}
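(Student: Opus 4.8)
The plan is to prove the equivalent statement that $G$ has a $2$-regular subgraph omitting at most $\max\{0,\FL{(d+c-1)/2}\}$ vertices; write $b(G)=n-f_2(G)$ for the least number of omitted vertices, so the goal is $b(G)\le\max\{0,(d+c-1)/2\}$ (an inequality between an integer and a half-integer, hence equivalent to the floor form). I would argue by induction, the main reduction being on the number $c$ of cut-edges, with the cut-edge-free (bridgeless) case as the base. First I would reduce to connected $G$: the parameters $n$, $m$, and $c$ are additive over components, hence so is $d=3n-2m$, and a disjoint union of cycles in $G$ is the union of such in the components, so $b(G)=\sum_i b(G_i)$. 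Each component with $d_i+c_i\ge1$ contributes $(d_i+c_i-1)/2$, and a single such component already supplies the global ``$-1$'', so the componentwise bounds sum to the desired bound; a cubic bridgeless component (or an isolated vertex) is handled trivially and causes no loss.

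For the inductive step, suppose $G$ is connected and has a cut-edge $e=xy$. Deleting $e$ splits $G$ into connected pieces $G_1\ni x$ and $G_2\ni y$. The cut-edges of $G$ other than $e$ are exactly the cut-edges of $G_1$ together with those of $G_2$, so $c=c_1+c_2+1$; and since $n_1+n_2=n$ while $m_1+m_2=m-1$, we get $d_1+d_2=3n-2(m-1)=d+2$, i.e.\ $d=d_1+d_2-2$. Crucially, in $G_1$ the vertex $x$ has lost a degree, so its deficiency is positive and $d_1\ge1$; likewise $d_2\ge1$. Hence each $\max\{0,(d_i+c_i-1)/2\}$ equals $(d_i+c_i-1)/2\ge0$, and since any $2$-regular subgraph of $G$ avoiding $e$ is the disjoint union of $2$-regular subgraphs of $G_1$ and $G_2$, induction yields
\[ b(G)\le b(G_1)+b(G_2)\le \FR{d_1+c_1-1}{2}+\FR{d_2+c_2-1}{2}=\FR{d+c-1}{2}, \]
as required. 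A leaf of $G$ produces a single-vertex piece, whose base case is immediate.

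It remains to treat the base case $c=0$, where $G$ is connected and bridgeless. If $|V(G)|=1$ the claim is trivial; otherwise a degree-$1$ vertex would create a cut-edge, so $G$ has minimum degree $2$ and $d$ equals the number of degree-$2$ vertices. Here I would pass to an auxiliary cubic bridgeless multigraph $H\supseteq G$, obtained by pairing the degree-$2$ vertices and attaching to each pair a small $2$-edge-connected gadget that raises both endpoints to degree $3$ while keeping $H$ bridgeless (when $d$ is odd, the one unpaired vertex is absorbed separately, and this is what accounts for the ``$-1$''); then I would apply Petersen's theorem~\cite{1891Pe} to $H$. The point is to convert a $1$-factor of $H$ into a large union of cycles in $G$: a perfect matching of $H$ that matches each degree-$2$ vertex \emph{into} its gadget yields a $2$-factor of $H$ whose gadget parts close off, so that deleting the gadgets leaves a genuine $2$-regular subgraph of $G$. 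I expect the main obstacle to be precisely this accounting: controlling, through the choice of the perfect matching (equivalently, through the Tutte--Berge matching structure of $G$ relative to its degree-$2$ vertices), how many gadgets fail to close and hence how many vertices are lost, and showing this never exceeds $\FL{(d-1)/2}$. The ``$-1$'' reflects the one unit of deficiency that bridgelessness always absorbs for free, in the spirit of forcing a $2$-factor through a prescribed edge.

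Finally, for sharpness I would exhibit, for each attainable pair $(d,c)$, a subcubic multigraph assembled from balloons joined by cut-edges in which every disjoint union of cycles provably misses $\FL{(d+c-1)/2}$ vertices, verify equality, and then trace the two reductions backward to show that these are the only extremal multigraphs.
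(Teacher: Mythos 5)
Your outer structure coincides with the paper's: the same reduction to connected graphs, the same induction on cut\hyphen{}edges with the identities $c=c_1+c_2+1$ and $d=d_1+d_2-2$, and the same observation that $d_1,d_2\ge 1$ so the maxima disappear. The genuine gap is the base case $c=0$, $d>0$, which is the entire difficulty of the theorem and which you only sketch. Your plan --- pair the degree\hyphen{}$2$ vertices, attach a $2$-edge-connected gadget to each pair, and apply Petersen to the resulting cubic bridgeless $H$ --- does not control the quantity you need, for two reasons. First, Petersen's theorem gives no control over \emph{which} perfect matching (equivalently, which $2$-factor) of $H$ you get. A gadget ``closes off'' only when the $2$-factor of $H$ avoids both of its attaching edges, i.e.\ when the complementary perfect matching contains both; there is no tool that forces many prescribed edges into a perfect matching of a cubic bridgeless multigraph, and indeed this is impossible in general, since the graphs in $\GG$ are bridgeless with $d=3$ and have no $2$-factor at all. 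Second, and more damagingly, the cost of a gadget that fails to close is not bounded by the size of the gadget: the $2$-factor of $H$ then contains a cycle passing through the gadget, and deleting the gadget turns that entire cycle into a path of $G$, all of whose vertices are lost. So even a bound of $\lfloor(d-1)/2\rfloor$ on the number of failing gadgets would not yield the vertex bound.

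The paper closes this case with three different tools, none of which appears in your sketch. For $d>3$ it suppresses the degree-$2$ vertices, weights each resulting edge by the length of the thread it replaces, and invokes the Edmonds-based lemma of O and West that a $2$-edge-connected cubic weighted multigraph has a perfect matching carrying at most $1/3$ of the total weight; the complementary $2$-factor expands to a $2$-regular subgraph of $G$ with at least $2m/3$ vertices, omitting at most $d/3<(d-1)/2$ vertices. For $d\in\{1,2\}$ it uses Plesn\'{\i}k's theorem to find a perfect matching of the suppressed graph avoiding the one or two suppressed edges, hence a $2$-factor of $G$. For $d=3$ it attaches balloons to form a cubic $G'$ and runs a Tutte-set analysis, which both proves the bound (omit at most one vertex) and produces the extremal family $\GG$. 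The ``accounting'' you defer to ``the Tutte--Berge matching structure'' is precisely this three-way case analysis; without it the proof is not complete.
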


Several constructions of sharpness examples together lead to a characterization
of all sharpness examples.

\begin{example} \label{ex:sharp}
{\it Trees.}
A subcubic $n$-vertex tree has $n-1$ cut-edges.  Its $1$-deficit is
$3n-2(n-1)$, so in this case $(d+c-1)/2=n$.  Hence Theorem~\ref{thm:subcubic}
guarantees nothing, and in fact a tree has no $2$-regular subgraph.

{\it Balloons.}
By definition, a balloon has no cut-edge and has $1$-deficit $1$.
Theorem~\ref{thm:subcubic} guarantees a $2$-factor, which achieves equality
in the bound.

{\it Bipartite multigraphs.}
Let $H$ be a $2$-connected cubic bipartite multigraph with parts $X$ and
$Y$; note that $|X|=|Y|$.  Let $G=H-\hat y$, where $\hat y\in Y$.  If $G$ is
$2$-connected, then $G$ has no cut-edge and has $1$-deficit $3$.
Since the number of vertices in $G$ is odd and all cycles in $G$ are even,
$G$ has no $2$-factor.  Theorem~\ref{thm:subcubic} guarantees a $2$-regular
subgraph in $G$ with $n-1$ vertices, where $n=|V(G)|$.  Hence $G$ is a
sharpness example.
\end{example}

The argument for bipartite multigraphs in Example~\ref{ex:sharp} applies to
confirm sharpness for a larger family.  (Recall that a cubic multigraph is
$2$-connected if and only if it has no cut-edge, with the exception of the
loopless multigraph with two vertices and three edges.)

\begin{definition}\label{GGdef}
Let $\GG$ be the family of multigraphs obtained in the following way: 

(1) Start with a $2$-connected cubic bipartite multigraph $H$ with parts $X$ and
$Y$.

(2) Delete one vertex $\hat y\in Y$ such that $H-\hat y$ is $2$-connected. 

(3) Explode (or not) each vertex $y$ in $Y-\hat y$, where {\it exploding} $y$
means taking the disjoint union of the current graph with a $2$-connected cubic
multigraph $F$ and then replacing both $y$ and a vertex $z$ in $F$ with three
edges joining the neighborhoods of $y$ and $z$ so that all vertices have
degree $3$.
\end{definition}

We will show that combining sharpness examples via cut-edges preserves
sharpness.  Trees are assembled in this way from single vertices, so we do
not need them as fundamental building blocks for sharpness examples.  With
the characterization of sharpness, our main result (including all those
mentioned previously and proved in Section 2) is then the following.

\begin{thm}\label{thm:subcubic'}
If $G$ is a subcubic $n$-vertex multigraph with $c$ cut-edges and
$1$-deficit $d$, then $f_2(G)\ge n-\max\{0,{d+c-1\over 2}\}$.
When $G$ is connected, equality holds if and only if each component after
deleting all the cut-edges is a single vertex, a balloon, or a graph in $\GG$.
\end{thm}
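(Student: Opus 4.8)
The inequality is exactly Theorem~\ref{thm:subcubic}, so the work lies in the equality characterization, and the plan is to reduce it to a purely local question about $2$-edge-connected pieces. Assume $G$ is connected and delete all $c$ cut-edges to obtain components $P_1,\ldots,P_t$, each of which is $2$-edge-connected or a single vertex. Since a cut-edge lies in no cycle, every $2$-regular subgraph of $G$ is a disjoint union of $2$-regular subgraphs of the $P_i$, so $f_2(G)=\sum_i f_2(P_i)$ and the number of omitted vertices adds over the pieces. Deleting a cut-edge lowers the degree of each of its ends by $1$, so it raises the total deficit by $2$, giving $\sum_i d_i=d+2c$; and because the cut-edges join the pieces in a tree we have $t=c+1$, whence $\sum_i\frac{d_i-1}{2}=\frac{d+c-1}{2}$. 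Applying Theorem~\ref{thm:subcubic} to each $P_i$ (which has no cut-edge) and summing reproduces the bound for $G$, and equality holds if and only if each piece omits exactly $\frac{d_i-1}{2}$ vertices. Since omitted counts are integers, this forces every $d_i$ to be odd and each $P_i$ to satisfy $f_2(P_i)=|V(P_i)|-\frac{d_i-1}{2}$. Thus it remains to classify the $2$-edge-connected subcubic multigraphs $P$ of odd $1$-deficit $d_P$ that are sharpness examples; assembling such pieces by cut-edges then produces exactly the claimed family, which also yields the ``assembling preserves sharpness'' direction.

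I would classify these $2$-edge-connected sharpness examples by their deficit. If $d_P=1$, then $P$ has a unique degree-$2$ vertex, and suppressing it produces a $2$-edge-connected cubic multigraph; hence $P$ is a balloon, and by Theorem~\ref{thm:subcubic} (equivalently Petersen's theorem) $P$ has a $2$-factor, so it is automatically sharp. If $d_P=3$ and $|V(P)|=1$, then $P$ is a single vertex, trivially sharp. If $d_P=3$ and $|V(P)|\ge2$, the bound says at most one vertex is omitted, so $P$ is sharp precisely when it has no $2$-factor; the heart of the proof is to show that a $2$-edge-connected subcubic multigraph with exactly three degree-$2$ vertices has no $2$-factor if and only if it lies in $\GG$. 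Finally one must rule out $d_P\ge5$, i.e.\ show that no $2$-edge-connected subcubic multigraph of $1$-deficit at least $5$ is a sharpness example.

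For the deficit-$3$ classification, the ``if'' direction extends the argument of Example~\ref{ex:sharp}: the bipartite base $H-\hat y$ has odd order and only even cycles, hence no $2$-factor, and each explosion glues a $2$-connected cubic multigraph across a $3$-edge-cut. Because a $2$-factor meets every edge-cut in an even number of edges, a $2$-factor of an exploded graph would restrict, after contracting the glued part, to a $2$-factor of the smaller graph, so inductively to one of the base; this is impossible, and hence every member of $\GG$ is $2$-factor-free and therefore sharp. For the ``only if'' direction, let $P$ be $2$-edge-connected with three degree-$2$ vertices and no $2$-factor. If $P$ has a nontrivial $3$-edge-cut, I split $P$ along it, recognize one side as an exploded $2$-connected cubic multigraph, and induct on the smaller $2$-factor-free deficit-$3$ graph on the other side. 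Otherwise $P$ is essentially $4$-edge-connected; then I add a new vertex $w$ joined to the three degree-$2$ vertices to form a $2$-edge-connected cubic multigraph $\widehat P$, and analyze a Tutte obstruction to a $2$-factor of $P$ to conclude that $\widehat P$ is bipartite and that $P=\widehat P-w$ has the form $H-\hat y$. This last step---converting the absence of a $2$-factor into bipartiteness under high edge-connectivity---is the main obstacle, and I expect it to require careful control of the Tutte set $(S,T)$ against the degree and connectivity constraints.

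To rule out $d_P\ge5$, the key point is that the only mechanism forcing many omitted vertices in a $2$-edge-connected subcubic graph is a bipartite-type imbalance measured by a Tutte set. A short parity computation shows that, with $1$-deficit $d_P$, such an imbalance can force $\frac{d_P-1}{2}$ omissions when $d_P\le3$ but strictly fewer once $d_P\ge5$; equivalently, one exhibits an augmenting alternating structure through the abundant degree-$2$ vertices that covers one additional vertex, contradicting sharpness. Finally I note the single degenerate case $d+c=0$, in which the connected graph $G$ is itself $2$-edge-connected and cubic and satisfies $f_2(G)=|V(G)|$ trivially while lying outside $\GG$ and the balloon and single-vertex classes; the characterization is to be read for $d+c\ge1$, where the piece decomposition above places every component among single vertices, balloons, and members of $\GG$.
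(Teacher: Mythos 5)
Your reduction to the $2$-edge-connected pieces and the case split by deficit parallels the paper's induction on cut-edges, but two steps that you leave unproved are precisely where the paper's real work lies, so the proposal has genuine gaps. First, ruling out sharpness for $d_P\ge 5$ (and, more basically, even establishing the inequality for large deficit, which you cannot simply import from Theorem~\ref{thm:subcubic} since that is the same unproved bound) is not a ``short parity computation'': there is no off-the-shelf Tutte-type obstruction for $2$-regular subgraphs, and ``an augmenting alternating structure through the degree-$2$ vertices'' is not an argument. The paper instead suppresses the degree-$2$ vertices, applies Lemma~\ref{lem:weight} (Edmonds/O--West: a weighted $2$-edge-connected cubic multigraph has a perfect matching carrying at most $1/3$ of the weight), and deletes that matching to get a $2$-regular subgraph with at least $2m/3$ vertices, hence omitting at most $d/3<(d-1)/2$ vertices when $d>3$. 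You need this lemma or a substitute; nothing in your sketch supplies one. Second, your deficit-$3$ analysis defers the decisive step (``no $2$-factor plus high edge-connectivity implies bipartite'') and you explicitly flag it as the main obstacle. The paper avoids any split on $3$-edge-cuts: it attaches a balloon at each of the three degree-$2$ vertices to form a cubic $G'$, takes a Tutte set $S$ with $o(G'-S)\ge|S|+2$, and plays the count $m\le 3|S|$ against $m\ge 3+3(|S|-1)$ (using $2$-edge-connectedness of $G$ to force three edges into each non-balloon odd component). The forced equality makes $S$ independent with exactly $|S|-1$ non-balloon components, which \emph{is} the bipartite graph $H-\hat y$ with the components of $G-S$ as the (possibly exploded) vertices of $Y-\hat y$; bipartiteness is an output of the count, not a separate connectivity argument.

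Two smaller points. Your proof that members of $\GG$ have no $2$-factor, via contracting an exploded part across its $3$-edge-cut, breaks when the $2$-factor uses zero edges of that cut: the contraction then isolates the contracted vertex rather than yielding a $2$-factor of the smaller graph, so the induction does not close (the paper's orientation/pigeonhole argument on cycle-successors of $X$, using $|X|>|Y-\{\hat y\}|$, handles all cases at once). And the claim that a balloon has a $2$-factor does not follow from Petersen alone: after suppressing the degree-$2$ vertex you need a $2$-factor of the cubic graph that \emph{uses} the suppressed edge, which is why the paper invokes Plesn\'{\i}k's Lemma~\ref{plesnik} to find a $1$-factor avoiding that edge.
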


In Section 3, we offer additional enhancements.  First, we generalize
by restricting to graphs with girth at least $g$.  Second, we show that one
can restrict the initial bipartite multigraph $H$ in the definition of $\GG$
by forbidding multi-edges.  Third, one can alternatively restrict each
multigraph $F$ used to explode a vertex to be factor-critical, where
{\it factor-critical} means having a matching that omits only any one vertex.
However, one cannot ensure these latter two enhancements simultaneously.

Generalizing the problem, one would seek first a large $2$-regular subgraph
when $G$ is $(2r+1)$-regular, and then more generally a large $2k$-regular
subgraph when $G$ is $(2r+1)$-regular.  It is reasonable to think that
$f_2(G)\ge n-\max\{0,\CL{d+c-1\over 2r}\}$ holds when $G$ has maximum degree
$2r+1$ with $c$ cut-edges and $r$-deficit $d$, because sharpness holds in
two quite different classes.  Equality holds for trees ($n-1$ cut-edges,
$r$-deficit $(2r-1)n+2$, and no $2$-regular subgraph) and for $(2r+1)$-regular
graphs with at most $2r$ cut-edges (Hanson, Loten, and Toft~\cite{HLT} showed
that every such graph has a $2$-factor).

For the general problem of minimizing $f_{2k}(G)$ when $G$ is $(2r+1)$-regular,
Kostochka et al.~\cite{arxiv_KRTWZ} generalized~\cite{HLT} by showing that if
$k<(2r+1)/3$ and $G$ has at most $2r-3(k-1)$ cut-edges, then $G$ has a
$2k$-factor.  Therefore, we are interested in how large a $2k$-regular subgraph
is guaranteed when there are more cut-edges.  In this paper, we settle the case
$k=r=1$.

\section{The Main Result}

To prove the desired bound on the number of vertices omitted by a largest
$2$-regular subgraph, in cases where the graph has no cut-edge we will need two
earlier results.

First, a result of Edmonds~\cite{1965Ed} easily implies the following lemma.

\begin{lem}[O and West~\cite{2015OWe}]\label{lem:weight}
Every edge-weighted 2-edge-connected 3-regular multigraph has a perfect
matching containing at most 1/3 of the total weight.
\end{lem}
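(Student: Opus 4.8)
The plan is to invoke Edmonds' characterization of the perfect matching polytope \cite{1965Ed} and then apply a simple averaging argument. Recall that Edmonds' theorem describes the convex hull of the incidence vectors of perfect matchings of a graph $H$ as the set of vectors $x\in\mathbb{R}^{E(H)}$ satisfying $x_e\ge 0$ for every edge $e$, the degree constraints $\sum_{e\ni v}x_e=1$ for every vertex $v$, and the odd-cut constraints $\sum_{e\in\partial(S)}x_e\ge 1$ for every odd subset $S\subseteq V(H)$, where $\partial(S)$ denotes the set of edges having exactly one endpoint in $S$.

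First I would verify that the uniform vector $x$ with $x_e=1/3$ for every edge $e$ lies in this polytope. Nonnegativity is immediate. Since $H$ is $3$-regular, each vertex is incident to exactly three edges, so $\sum_{e\ni v}x_e=3\cdot\frac13=1$, giving the degree constraints. For the odd-cut constraints, let $S$ have odd size. In a cubic multigraph the degree sum $\sum_{v\in S}\deg v=3|S|$ counts each internal edge (and each loop) twice and each edge of $\partial(S)$ once, so $|\partial(S)|\equiv 3|S|\pmod 2$, which is odd; hence $|\partial(S)|$ is odd. Because $H$ is $2$-edge-connected we have $|\partial(S)|\ge 2$, and being odd it is therefore at least $3$, so $\sum_{e\in\partial(S)}x_e\ge 3\cdot\frac13=1$. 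This is the one place where the $2$-edge-connectivity hypothesis is used, and I expect the verification of the odd-cut constraints to be the crux of the argument.

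Having placed $x$ in the perfect matching polytope, Edmonds' theorem lets me write $x=\sum_i\lambda_i\chi_{M_i}$ as a convex combination of incidence vectors of perfect matchings $M_i$ of $H$, with $\lambda_i\ge 0$ and $\sum_i\lambda_i=1$. Writing $w$ for the edge weights and $W=\sum_e w_e$ for the total weight, interchanging the order of summation yields
\[
\sum_i\lambda_i\, w(M_i)=\sum_e w_e\sum_{i:\,e\in M_i}\lambda_i=\sum_e w_e\, x_e=\frac13\sum_e w_e=\frac{W}{3}.
\]
The left side is a convex combination of the numbers $w(M_i)$ and equals $W/3$, so at least one term satisfies $w(M_i)\le W/3$. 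That matching $M_i$ is the desired perfect matching containing at most $1/3$ of the total weight, which completes the proof.
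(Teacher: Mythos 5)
Your proof is correct and is exactly the argument the paper has in mind: the paper states that Edmonds' matching polytope theorem ``easily implies'' the lemma, and your derivation --- placing the all-$\frac13$ vector in the polytope via the degree constraints and the odd-cut constraints (where $2$-edge-connectivity plus the parity of $|\partial(S)|$ gives $|\partial(S)|\ge 3$), then averaging over the convex combination --- is the standard way to do so. No gaps; this matches the cited proof of O and West.
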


The results of Edmonds~\cite{1965Ed} were used earlier in an essentially
equivalent way by Naddef and Pulleyblank~\cite{NP} to prove that every
edge-weighted $(t-1)$-edge-connected $t$-regular multigraph of even order
has a $1$-factor with weight at least a fraction $1/t$ of the total weight.
Here the {\it order} of a graph is its number of vertices.

We also use a special case for cubic graphs of a result of Plesn\'{\i}k that
strengthens the usual conclusion about $1$-factors in regular graphs.

\begin{lem}[Plesn\'{\i}k~\cite{Ples}]\label{plesnik}
Every $(t-1)$-edge-connected $t$-regular graph of even order has a $1$-factor
that avoids any $t-1$ specified edges.
\end{lem}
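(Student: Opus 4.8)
The plan is to recast the statement as a perfect-matching problem and attack it with Tutte's $1$-factor theorem. Let $e_1,\dots,e_{t-1}$ be the specified edges and set $G'=G-\{e_1,\dots,e_{t-1}\}$, deleting those edges while keeping all vertices. A $1$-factor of $G$ avoiding $e_1,\dots,e_{t-1}$ is precisely a perfect matching of $G'$, so it suffices to verify Tutte's condition $o(G'-S)\le|S|$ for every $S\subseteq V(G)$, where $o$ counts odd components. Fix such an $S$, let $C_1,\dots,C_q$ be the odd components of $G'-S$, and write $q=o(G'-S)$; the goal is $q\le|S|$.

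The core is a counting argument on edge boundaries, carried out in the original graph $G$. First I would record the parity fact that for any vertex set $C$ the boundary $\partial_G(C)$ (edges of $G$ with exactly one endpoint in $C$) satisfies $\partial_G(C)\equiv t|C|\pmod 2$. Since each $C_i$ is odd this gives $\partial_G(C_i)\equiv t\pmod 2$; combined with the fact that $(t-1)$-edge-connectivity forces $\partial_G(C_i)\ge t-1$ for the proper nonempty set $C_i$, the parity upgrades this to $\partial_G(C_i)\ge t$. Next I relate the two graphs: in $G'$ every boundary edge of the component $C_i$ runs to $S$, so the number $d_i$ of $G'$-edges from $C_i$ to $S$ equals $\partial_G(C_i)$ minus the deleted edges crossing $C_i$. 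Summing over $i$, each of the $t-1$ deleted edges is charged to at most two of the $C_i$, while $\sum_i d_i\le t|S|$ because these edges are incident to $S$ and $G$ is $t$-regular. Putting the pieces together yields $qt\le t|S|+2(t-1)$, hence $q\le|S|+1$.

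The main obstacle is exactly this off-by-one: the crude count delivers $q\le|S|+1$, one worse than needed. I would close the gap with a parity observation about the Tutte--Berge deficiency. For any graph, $o(G'-S)\equiv n-|S|\pmod 2$, since the number of odd components has the parity of the total vertex count, so $o(G'-S)-|S|\equiv n\pmod 2$. As $G$ has even order this difference is always even, and an even quantity that is at most $1$ is at most $0$, giving $q\le|S|$. Two points deserve care: the strengthening $\partial_G(C_i)\ge t$ (rather than merely $t-1$) is what makes even the case $S=\varnothing$ come out right, since otherwise two odd pieces joined only through the deleted edges could masquerade as a counterexample; and one must confirm that each odd $C_i$ is a proper subset of $V(G)$ so that edge-connectivity applies, which holds because $V(G)$ itself is an even component. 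With Tutte's condition verified for every $S$, the graph $G'$ has a perfect matching, and the lemma follows.
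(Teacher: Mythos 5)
Your proof is correct. Note first that the paper does not prove this lemma at all: it is quoted as a black box from Plesn\'{\i}k's 1972 paper, so there is no internal argument to compare against. What you have reconstructed is, in substance, the classical proof of Plesn\'{\i}k's theorem, and every step checks out. The parity identity $|\partial_G(C)|\equiv t|C|\pmod 2$ correctly upgrades the edge-connectivity bound $|\partial_G(C_i)|\ge t-1$ to $|\partial_G(C_i)|\ge t$ for each odd component $C_i$ (in either parity of $t$); the double-counting $qt-2(t-1)\le\sum_i d_i\le t|S|$ is valid because each deleted edge meets at most two of the $C_i$ and each vertex of $S$ is incident to at most $t$ edges of $G'$; and the resulting bound $q\le|S|+1$ is indeed closed to $q\le|S|$ by the observation that $o(G'-S)-|S|\equiv n\equiv 0\pmod 2$. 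You were also right to flag the two delicate points: that the case $S=\varnothing$ is saved precisely by the strengthened bound $|\partial_G(C_i)|\ge t$, and that each odd component is automatically a proper subset of $V(G)$ since $n$ is even. The one cosmetic remark is that you occasionally write $\partial_G(C)$ where you mean its cardinality, but the intent is unambiguous. As a self-contained, elementary derivation from Tutte's theorem, this is exactly the argument one would supply if the paper chose to include a proof rather than a citation.
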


We also use the special case of Tutte's $1$-Factor Theorem~\cite{Tu} for
$3$-regular multigraphs $G$, stating that if $G$ has no $1$-factor, then $V(G)$
contains a {\it Tutte set} $S$ such that $o(G-S)\ge|S|+2$, where $o(H)$ is the
number of components of $H$ having odd order, called {\it odd components}.

We can now prove the main result, which we restate for ease of reference.


\begin{thm}\label{thm:subcubic''}
If $G$ is a subcubic $n$-vertex multigraph with $c$ cut-edges and
$1$-deficit $d$, then $f_2(G)\ge n-\max\{0,{d+c-1\over 2}\}$.
When $G$ is connected, equality holds if and only if each component after
deleting all the cut-edges is a single vertex, a balloon, or a graph in $\GG$.
\end{thm}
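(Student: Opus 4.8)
The plan is to prove the inequality $f_2(G)\ge n-\max\{0,(d+c-1)/2\}$ and the equality characterization together by induction on $c$, the number of cut-edges, with the base case $c=0$ handled separately. Let me think about how the pieces fit.

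For $c=0$ (no cut-edges), the bound reads $f_2(G)\ge n-\max\{0,(d-1)/2\}$. Since $G$ is 2-edge-connected with max degree 3 and 1-deficit $d=3n-2m$, I want to find a large 2-regular subgraph. The natural idea is to add $d$ new "pendant-like" gadgets or to regularize $G$ to a cubic multigraph, then use Petersen's theorem / the weight lemma. The 1-deficit $d$ counts the total degree shortfall, so there are vertices of degree 1 and 2. I'd close these off: attach structure to make a 2-edge-connected 3-regular multigraph $G'$, apply Lemma~\ref{lem:weight} or Petersen to get a 2-factor of $G'$, and translate it back. The omitted vertices correspond to where the gadgets interfere—careful accounting should give at most $\lfloor(d-1)/2\rfloor$ omitted vertices.

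For the induction (recall this is the final statement, which combines the inequality of Theorem~\ref{thm:subcubic} with the sharpness characterization), the key is that cut-edges let me split. Pick a cut-edge $e$; deleting it gives components $G_1,G_2$ with cut-edge counts $c_1,c_2$ and deficits $d_1,d_2$ satisfying $c_1+c_2=c-1$ and $d_1+d_2=d+2$ (each endpoint of $e$ loses one degree). No 2-regular subgraph uses a cut-edge, so $f_2(G)=f_2(G_1)+f_2(G_2)$; I apply induction to each piece. The main arithmetic obstacle is checking that the max-with-zero and the floor/ceiling behave correctly when combining—that $\max\{0,(d_1+c_1-1)/2\}+\max\{0,(d_2+c_2-1)/2\}\le\max\{0,(d+c-1)/2\}$—and pinning down exactly when equality propagates. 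This is where the subtlety lives: equality in $G$ forces equality in each part, which inductively forces each piece's cut-edge-free components to be a single vertex, balloon, or a member of $\GG$, and I must verify that reattaching the cut-edge $e$ preserves this description of the components of $G$ minus all cut-edges.

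**The hard part** will be the base-case equality characterization: showing that among 2-edge-connected subcubic $G$, equality in $f_2(G)=n-(d-1)/2$ holds exactly for balloons ($d=1$) and graphs in $\GG$ ($d=3$), and not otherwise. This requires the full strength of the structural results—Tutte's theorem to locate a Tutte set $S$ when no 1-factor exists in the regularized graph, together with Plesn\'{\i}k's Lemma~\ref{plesnik} to control which edges the matching avoids. The delicate point is that exploding vertices (Definition~\ref{GGdef}) preserves both 2-connectivity and the extremal value of $f_2$, so I must verify an "explosion lemma": replacing a vertex $y$ by a 2-connected cubic gadget $F$ changes $n$ and $f_2$ in lockstep, keeping equality. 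I expect to establish this by analyzing how a maximum 2-regular subgraph interacts with the three edges at the exploded site, reducing the count on $F$ to a near-2-factor question answered by the weight lemma. Once the explosion step and the bipartite base (the $H-\hat y$ construction from Example~\ref{ex:sharp}) are in hand, a parity/counting argument rules out any other 2-edge-connected extremal graph, completing the characterization.
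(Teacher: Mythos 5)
Your overall architecture matches the paper's: induction on the number of cut-edges, splitting at a cut-edge with the bookkeeping $c_1+c_2=c-1$, $d_1+d_2=d+2$ (which in fact makes the combined bound an exact identity, not an inequality to be checked, once one notes $d_i\ge 1$ so the max is inactive in each piece), and a separate treatment of the $2$-edge-connected case using the weighted-matching lemma, Plesn\'{\i}k, and Tutte sets. The induction step as you describe it is essentially the paper's and is fine.

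The genuine gap is in the base case $c=0$, and it is where all the content of the theorem lives. Your proposed mechanism --- ``attach gadgets to make a $2$-edge-connected cubic $G'$, find a $2$-factor of $G'$, translate back, and account for where the gadgets interfere'' --- does not work as stated: a $2$-factor of the regularized graph does not restrict to a $2$-regular subgraph of $G$ (vertices whose $2$-factor edges enter a gadget are left with degree $1$), and you would have to discard every cycle meeting a gadget; bounding that loss by $(d-1)/2$ is exactly the difficulty, and no ``careful accounting'' is supplied. The missing idea is the dichotomy on $d$: for $d>3$ one does \emph{not} regularize but instead suppresses the $2$-vertices and applies the weighted perfect-matching lemma to get a $2$-regular subgraph with at least $2m/3$ vertices, hence omitting at most $d/3<(d-1)/2$ vertices, so the bound holds strictly and no extremal graphs arise there; for $d\in\{1,2\}$ Plesn\'{\i}k's lemma forces a genuine $2$-factor; and only for $d=3$ does one augment with balloons and seek a \emph{1-factor} of $G'$ (not a $2$-factor) whose deletion yields a $2$-factor of $G$, with the Tutte-set analysis producing the bipartite structure of $\GG$ when no such $1$-factor exists. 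Without the $d>3$ reduction your characterization of extremal graphs cannot get off the ground.

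Secondly, your ``explosion lemma'' plan for showing that every member of $\GG$ actually achieves equality is aimed at the wrong target: the weight lemma has nothing to do with proving that such a graph has \emph{no} $2$-factor, and a purely local lockstep argument is insufficient when several vertices of $Y$ are exploded, since a hypothetical $2$-factor could enter and leave an exploded component in ways that do not project to a $2$-factor of $H-\hat y$. The needed argument is global: orient the cycles of a hypothetical $2$-factor; each $x\in X$ is followed on its cycle by a vertex lying in the component of $G-X$ corresponding to some $y\in Y-\{\hat y\}$, the cycle must exit that component through the same $y$'s component, and $3$-regularity of $H$ makes this assignment injective, contradicting $|X|>|Y|-1$. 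You gesture at ``a parity/counting argument'' only for ruling out other extremal graphs, not for this step, so as written the sufficiency half of the characterization is unproved.
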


\begin{proof}
By Petersen's Theorem~\cite{1891Pe}, a cubic graph $G$ with at most two
cut-edges has a 2-factor.  Hence we may assume $d>0$ or $c>2$.  Indeed,
we may assume this in each component.  Hence we may also assume that $G$
is connected and must prove $f_2(G)\ge n-(d+c-1)/2$.

The difficult case is when $c=0$ and $d>0$.  We postpone this basis step for a
proof by induction on the number of cut-edges, considering first the induction
step.

Deleting a cut-edge $e$ from $G$ leaves its endpoints with degree less than $3$.
Letting $G-e$ be the disjoint union of $G_1$ and $G_2$, each containing an
endpoint of $e$.  For $i\in\{1, 2\}$, let $c_i$ be the number of cut-edges and
$d_i$ be the $1$-deficit of $G_i$.  Since neither $G_1$ nor $G_2$ is
$3$-regular, and both are subcubic with fewer cut-edges than $G$, the
induction hypothesis applies to each.

That is, $G_i$ has a 2-regular subgraph $H_i$ omitting at most
$(d_i+c_i-1)/2$ vertices, and the disjoint union $H_1+H_2$ is a 2-regular
subgraph of $G$ omitting at most $(d_1+d_2+c_1+c_2-2)/2$ vertices.  Since
$d = d_1+d_2-2$ and $c=c_1+c_2+1$, the graph $H_1+H_2$ omits at most
$(d+c-1)/2$ vertices of $G$.  Equality holds if and only if it holds in both
$G_1$ and $G_2$, which implies inductively that equality holds in $G$ if and
only if $G$ has the claimed description.

\smallskip
Now consider the basis step: $G$ has no cut-edge, but $d>0$.  If $G$ has 
only one vertex, then the formula holds with equality whether the vertex has
a loop or not.  Hence we are reduced to a connected subcubic multigraph with
more than one vertex.

Since $G$ has no cut-edge, $G$ now has minimum degree $2$.  We may also assume
that $G$ has maximum degree $3$, since $f_2(G)=n$ when $G$ is $2$-regular.  We
use Lemma~\ref{lem:weight}.  A {\it thread} in a graph is a maximal path whose
internal vertices have degree $2$ (it may have just one edge); the endpoints of
each thread in $G$ have degree $3$.  Let a {\it $j$-vertex} be a vertex of
degree $j$.

Suppress each $2$-vertex of $G$ by turning each thread through $2$-vertices
into one weighted edge whose weight equals the length of the thread.  The total
weight of the resulting graph $G'$ is the number of edges in $G$.  Deleting
from $G'$ the matching guaranteed by Lemma~\ref{lem:weight} leaves a $2$-factor
of $G'$ whose total weight is at least $2/3$ of the total weight of $G'$.  This
$2$-factor expands back into a $2$-regular subgraph of $G$ that has at least
$2/3$ of the edges of $G$.

\smallskip
Hence $G$ has a $2$-regular subgraph $H$ with at least $2m/3$ vertices, where
$m = |E(G)|$.  Let $t = |V(H)|$.  Since $d = 3n-2m$, we have
$n-t \le n-(2m/3) = d/3$.  If $d>3$, then $d/3 < (d-1)/2 = (d+c-1)/2$, so here
the bound holds and cannot hold with equality.

If $d\in\{1,2\}$, then the formula requires a $2$-factor.  Suppressing the
$2$-vertex or the two $2$-vertices leaves a $3$-regular graph $G'$ with no
cut-edge.  By Lemma~\ref{plesnik}, the graph $G'$ has a $1$-factor that omits
the edge(s) formed by suppressing $2$-vertices.  Deleting this $1$-factor
leaves a $2$-factor in $G'$ that uses those edge(s), and it expands to a
$2$-factor in $G$.  When $d=2$, equality cannot hold in the formula, since the
formula is not an integer.  When $d=1$, equality holds, and $G$ is a balloon,
as claimed.

\smallskip
Finally, assume $d=3$.  At each of the three $2$-vertices of $G$, add a
cut-edge and a balloon to form a $3$-regular graph $G'$.  If $G'$ has a
$1$-factor, then deleting its edges (and the added vertices) leaves a
$2$-factor of $G$.  Otherwise, $G'$ has a Tutte set $S$ such that
$o(G'-S)\ge|S|+2$.  By parity of the degree-sum, an odd number of edges join
$S$ to any odd component of $G'-S$.

Let $m$ be the number of edges joining $S$ to $V(G'-S)$; note that $m\le 3|S|$.
Since $G$ has no cut-edge, each odd component of $G'-S$ other than an added
balloon receives at least three edges from $S$.  Therefore,
$m\ge 3+3(|S|-1)$, and equality must hold.  Since $G'$ is connected, also
$G'-S$ has no even components, the components of $G'-S$ are the added balloons
and others receiving exactly three edges, and $S$ is an independent set.

The components of $G'-S$ other than the added balloons are the set $T$ of
components of $G-S$, each having odd order.
The edges in $G$ joining $S$ to $T$ form a bipartite multigraph $F$ with parts
$S$ and $T$ obtainable by deleting one vertex of a 3-regular bipartite graph
(which produces the three $2$-vertices in $G$).  To obtain $G$ from $F$, each
vertex of $T$ is left alone or is exploded.  Thus every extremal graph with
$d=3$ has the form described.

\smallskip
Also every such graph is extremal.  To prove this, it remains only to show
that every graph $G\in\GG$ has no $2$-factor.  The construction of $G$
according to Definition~\ref{GGdef} begins with a bipartite graph $H$ having
parts $X$ and $Y$.  A vertex $y\in Y-\{\hat y\}$ may be exploded using a
$2$-connected multigraph $F$, but the vertices of $F-z$ that are made adjacent
to the neighborhood of $y$ in $H$ lie in the same component of $G-X$.

Suppose that $G$ has a $2$-factor and orient each cycle consistently.
Each vertex of $X$ is followed on its cycle by a vertex that corresponds to a
particular vertex $y$ in $Y$, and the cycle can only leave that component of 
$G-X$ via a vertex corresponding to the same vertex $y$.  Also, since $H$
is $3$-regular, that vertex $y$ cannot serve in this way for any other vertex
$x\in X$.  Since $|X|>|Y-\{\hat y\}|$, there cannot be disjoint
cycles covering all the vertices of $X$.
\end{proof}

\section{Enhancements}

In this section, we consider several refinements of the main result.

As noted in Corollary~\ref{cor:cubic}, Theorem~\ref{thm:subcubic'} specializes
for cubic graphs ($d=0$) to say for $n>4$ that every cubic $n$-vertex graph has
a $2$-regular subgraph with at least $5(n+2)/6$ vertices; this uses that
such a graph has at most $(n-7)/3$ cut-edges~\cite{2010OWe}.

\begin{example}\label{cut-edge}
Equality holds in Corollary~\ref{cor:cubic} for every graph $G$ obtained by
starting with a tree whose internal vertices all have degree $3$ and attaching
a $5$-vertex balloon at each leaf.  When all internal vertices have degree $3$,
the number of leaves in the tree exceeds the number of internal vertices by $2$.
The internal vertices lie in no cycle and hence in no $2$-regular subgraph,
while the balloons have $2$-factors.  With $t$ internal vertices and $n$
vertices altogether, we have $n=t+5(t+2)$ and $f_2(G)=5(t+2)$, so
$f_2(G)=5(n+2)/6$.

Furthermore, equality holds in $f_2(G)\ge3(n+2)/4$ for cubic multigraphs by
using the balloon obtained by subdividing one edge of a triple-edge instead
of the $5$-vertex simple balloon.  In both cases, this describes
all examples achieving equality (see~\cite{2010OWe}).
\end{example}

We can generalize Corollary~\ref{cor:cubic} and Example~\ref{cut-edge}
in terms of girth by considering the minimum number of vertices in a balloon
with girth $g$.  When $g\ge2$, a smallest balloon with girth $g$ arises from a
smallest $3$-regular (multi)graph with girth $g$ by subdividing one edge.  We
can pick an edge to subdivide that does not increase the girth as long as there
is an edge that does not belong to every shortest cycle.  Such an edge exists
because the vertex degrees are not $2$.

A smallest $k$-regular graph with girth $g$ is called a {\it $(k,g)$-cage} (for
$g=2$ it consists of two vertices joined by $k$ edges).  Determining the
minimum number $h(k,g)$ of vertices in a $(k,g)$-cage is a well-known and very
difficult problem.  The smallest balloon with girth $g$ will have $h(3,g)+1$
vertices.  For $g\in\{2,\ldots,12\}$, the number of vertices is
3, 5, 7, 11, 15, 25, 31, 59, 71, 113, 127, respectively (see~\cite{EJ},
for example).

\begin{cor}
If $G$ is a cubic $n$-vertex multigraph with girth $g$, then
$$f_2(G)\ge\min\left\{n,{\FR{h'}{h'+1}(n+2)}\right\},$$
where $h'=h(3,g)+1$.  The bound is sharp.  All examples achieving equality
arise by attaching a smallest balloon with girth $g$ at each leaf of a tree
whose internal vertices have degree $3$.
\end{cor}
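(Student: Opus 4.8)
The plan is to obtain this exactly as Corollary~\ref{cor:cubic} is obtained: feed an upper bound on the number of cut-edges into the cubic case ($d=0$) of Theorem~\ref{thm:subcubic'}, except that the cut-edge bound must now involve the girth. Since $d=0$, Theorem~\ref{thm:subcubic'} gives $f_2(G)\ge n-\max\{0,(c-1)/2\}$. We may assume $G$ is connected, as $f_2$ is additive over components and extra components only relax the $+2$ term, and we may assume $c\ge3$, since otherwise Petersen's Theorem gives a $2$-factor and $f_2(G)=n$; likewise the minimum in the statement equals $n$ whenever $n\le2h'$. A direct manipulation shows that $f_2(G)\ge\FR{h'}{h'+1}(n+2)$ follows from $n-(c-1)/2\ge\FR{h'}{h'+1}(n+2)$, which is equivalent to
\[
 c\le\FR{2n-3h'+1}{h'+1}.
\]
So it suffices to prove this cut-edge bound and to determine when it is tight; rounding up the right side is harmless because $f_2(G)$ is an integer. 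For $g=3$ we have $h'=5$ and the bound reads $c\le(n-7)/3$, recovering the O and West estimate used for Corollary~\ref{cor:cubic}.

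To bound the cut-edges I would argue through the balloon structure. Form the \emph{bridge tree} $\mathcal T$ whose nodes are the maximal $2$-edge-connected subgraphs of $G$ (allowing a trivial node that is a single vertex meeting three cut-edges) and whose edges are the $c$ cut-edges. A node whose degree in $\mathcal T$ is $1$ is a balloon, and by the analysis preceding the corollary a balloon of girth at least $g$ has at least $h'=h(3,g)+1$ vertices; thus every leaf of $\mathcal T$ contributes at least $h'$ vertices. I would then eliminate nodes of degree $2$ in $\mathcal T$ by \emph{suppression}: such a node $B$ lies between two cut-edges, and deleting $V(B)$ while fusing those two cut-edges into one leaves a cubic multigraph of the same girth (the new edge is again a bridge and so lies on no cycle), decreasing $c$ by $1$ and $n$ by $|V(B)|$. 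Once no internal node has degree $2$ in $\mathcal T$, every internal node has degree at least $3$, so $\mathcal T$ has at least two more leaves than internal nodes. Writing $\ell$ and $s$ for the numbers of leaves and internal nodes, we then have $c=\ell+s-1$, $\ell\ge s+2$, and $n\ge h'\ell+s$, and these three relations combine to give exactly $c\le(2n-3h'+1)/(h'+1)$.

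The step I expect to be the main obstacle is the size estimate that legitimizes the suppression: every $2$-edge-connected multigraph of girth at least $g$ with exactly two vertices of degree $2$ and all others of degree $3$ has at least $(h'+1)/2$ vertices, which is precisely the inequality needed for the induction on $|V(G)|$ to preserve the cut-edge bound. Such a block is, up to two subdivided edges, a cubic multigraph of girth close to $g$, so its order is governed by the cage number $h(3,g)$; one expects $|V(B)|\ge h(3,g)-1\ge(h'+1)/2$ for $g\ge3$, with the case $g=2$ (a double edge, where $|V(B)|=2=(h'+1)/2$) checked directly. The delicate point is the girth bookkeeping—controlling how far the girth can fall when the two degree-$2$ vertices are smoothed, so that a Moore/cage-type bound can be applied—and this is the technical heart of the argument.

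Finally, for sharpness I would take the girth-$g$ analogue of Example~\ref{cut-edge}: a tree whose internal vertices have degree $3$ (so with $t$ internal vertices and $t+2$ leaves), with a smallest balloon of girth $g$ attached at each leaf by identifying the leaf with the degree-$2$ subdivision vertex of the balloon; then $n=t+h'(t+2)$ and $f_2(G)=h'(t+2)=\FR{h'}{h'+1}(n+2)$. For the characterization, equality in Theorem~\ref{thm:subcubic'} forces each block left after deleting the cut-edges to be a single vertex, a balloon, or a graph in $\GG$, so every node of $\mathcal T$ has degree $1$ or $3$ and no suppression is needed. Equality in the cut-edge bound then forces both $\ell=s+2$ and $n=h'\ell+s$, which pins each internal block to a single vertex (excluding $\GG$-graphs, which are larger) and each balloon to exactly $h'$ vertices, i.e.\ a smallest girth-$g$ balloon. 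This is precisely the stated family obtained by attaching smallest girth-$g$ balloons at the leaves of a tree whose internal vertices have degree $3$.
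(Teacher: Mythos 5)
Your overall strategy is the one the paper sketches: convert the girth hypothesis into a bound on the number of cut-edges via the minimum order of a girth-$g$ balloon, feed that into Theorem~\ref{thm:subcubic'}, and get the extremal family from the equality characterization. Your arithmetic is right (the target $c\le(2n-3h'+1)/(h'+1)$ does specialize to $(n-7)/3$ at $g=3$, and the leaf/internal-node count closes once every internal node of the bridge tree has degree at least $3$). The gap is exactly where you flag it: the claim that every $2$-edge-connected girth-$g$ block with two degree-$2$ vertices and the rest of degree $3$ has at least $(h'+1)/2$ vertices is never proved. The heuristic you offer, $|V(B)|\ge h(3,g)-1$, is unjustified — there is no reduction from such a block to a cubic girth-$g$ graph that loses only one vertex (smoothing the two degree-$2$ vertices can drop the girth by $2$, and adding an edge between them can create an arbitrarily short cycle) — and it is essentially a new cage-type lower bound, far stronger than anything available. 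A Moore-type ball count around a degree-$3$ vertex or edge does give $|V(B)|\ge 3\cdot 2^{r-1}$ (girth $2r$) or $2^{r+1}$ (girth $2r+1$), which happens to exceed $(h(3,g)+2)/2$ for every $g$ with known cage order, but this comparison depends on $h(3,g)$ staying close to the Moore bound and is not something your write-up establishes. As it stands, the "technical heart" is an open hole.

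The hole is avoidable, and the repair is to drop the global cut-edge bound and apply Theorem~\ref{thm:subcubic'} block by block. A nontrivial block incident to exactly $j$ cut-edges is $2$-edge-connected with $1$-deficit $j$, so by the theorem it omits at most $\max\{0,\FL{(j-1)/2}\}$ vertices; in particular every block meeting at most two cut-edges has a $2$-factor (this is the Plesn\'{\i}k case $d\in\{1,2\}$ of the main proof). Hence degree-$2$ nodes of the bridge tree contribute nothing to the omission count and require no lower bound on their order whatsoever. Writing $\ell$ for the number of leaf blocks (balloons, each with at least $h'$ vertices) and $s_3$ for the number of blocks meeting at least three cut-edges, one has $\sum_{j_B\ge3}(j_B-2)=\ell-2$, hence $s_3\le\ell-2$, the total omission is at most $(\ell-2+s_3)/2$, and $n\ge h'\ell+s_3$; these combine to give $\FR{h'}{h'+1}(n+2)$ exactly, with equality forcing every internal block to be a single trivalent vertex and every balloon to be a smallest girth-$g$ balloon (a $\GG$-block would meet three cut-edges and omit one vertex but carry many more than one vertex, destroying equality in $n\ge h'\ell+s_3$, as you note). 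So your proposal is the paper's approach with one lossy step — charging $1/2$ per cut-edge uniformly via the global bound $f_2(G)\ge n-(c-1)/2$ — and that step is precisely what forces the unproven block-size inequality on you.
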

\begin{proof}
(Sketch) The argument for the upper bound of O and West~\cite{2010OWe} on the
number of cut-edges depends on the smallest order of balloons.  The number of
cut-edges is maximized by attaching smallest subcubic balloons of girth $g$ 
to the leaves of a tree with internal vertices of degree $3$, which yields
the given formula.

The proof is inductive.  Achieving equality for a larger graph requires
achieving equality in both graphs obtained by deleting a cut-edge.  This leads
to the structure described.
\end{proof}

Next we refine Theorem~\ref{thm:subcubic'} by showing that $\GG$ can be
produced in a more restricted way.

\begin{prop}\label{GGsimple}
In Definition~\ref{GGdef} for the family $\GG$, the initial bipartite multigraph
$H$ generating any member of $\GG$ can be taken to be simple, without changing
the resulting family.
\end{prop}
\begin{proof}
For every multigraph in $\GG$, the $1$-deficit is $3$, there is no cut-edge,
and every largest $2$-regular subgraph omits exactly one vertex, as proved in
Theorem~\ref{thm:subcubic''}.

Hence in Theorem~\ref{thm:subcubic''} the graphs in $\GG$ arise only in the
case $d=3$ when the augmented graph $G'$ has no $1$-factor.  As described
there, for any Tutte set $S$ in $G'$, the edges joining $S$ and $T$ form a 
bipartite multigraph that can serve as the multigraph $H$ in the construction
of $G$ as a graph in $\GG$.

To ensure that $H$ can be chosen to be simple, we let $S$ be a minimal Tutte
set in $G'$.  It is an elementary exercise that for any minimal Tutte set $S$
in a cubic multigraph $G'$, each vertex in $S$ has all its neighbors in
distinct components of $G'-S$.  (If the neighbors of any $x\in S$ are confined
to fewer than three odd components of $G'-S$, then deleting $x$ from $S$
reduces $|S|$ by as much as it reduces the number of resulting odd components,
thereby yielding a smaller Tutte set.)

Since the neighbors of each $x\in S$ are in distinct components of $G'-S$,
the neighbors of $x$ in the resulting bipartite multigraph $H$ are distinct
vertices of $Y-\hat y$.
\end{proof}

Finally, the famous Gallai--Edmonds Structure Theorem~\cite{GEE,GEG}
that describes all largest matchings in a multigraph leads to another
refinement of the structure of members of $\GG$.

\begin{definition}
In a multigraph $G$, let $B$ be the set of vertices that are covered by every
maximum matching in $G$.  Let $A$ be the set of vertices in $B$ having at least
one neighbor outside $B$, let $C=B-A$, and let $D=V(G)-B$.  The
{\em Gallai--Edmonds Decomposition} of $G$ is the partition of $V(G)$ into
the three sets $A,C,D$.  The {\em deficiency} ${\rm def}(G)$ of a graph $G$ is
$\max_{S\esub V(G)}\{o(G-S)-|S|\}$.  
\end{definition}

\begin{thm}[Gallai--Edmonds Structure Theorem]
Let $A,C,D$ be the Gallai--Edmonds Decomposition of a multigraph $G$.
Let $\VEC G1q$ be the components of $G-A-C$.  If $M$ is a maximum matching
in $G$, then the following properties hold.

a) $M$ covers $C$ and matches $A$ into distinct components of $G-A-C$.

b) Each $G_i$ is factor-critical.

c) $o(G-A)-|A|={\rm def}(G)=q-|A|$.
\end{thm}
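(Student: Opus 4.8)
The plan is to read the decomposition through the notion of an \emph{inessential} vertex, meaning a vertex that some maximum matching of $G$ misses. With this reading, $D$ is exactly the set of inessential vertices, $B=V(G)\setminus D$ is the set of \emph{essential} vertices, and $G-A-C=G[D]$, so $G_1,\dots,G_q$ are precisely the components of the subgraph induced on the inessential vertices. Parts (a)--(c) then become statements about how a fixed maximum matching $M$ meets this set, and I would establish them in the order (b), (a), (c).

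The engine is Gallai's Lemma: \emph{a connected graph in which every vertex is inessential is factor-critical}. I would prove it by first showing that, under this hypothesis, every maximum matching misses exactly one vertex. If some maximum matching missed two vertices, I would choose such a pair $x,y$, over all maximum matchings, with $\mathrm{dist}(x,y)$ minimum; the case $x\sim y$ is impossible by maximality, and for an internal vertex $z$ on a shortest $x$--$y$ path one uses a maximum matching missing $z$ (which exists since $z$ is inessential) together with an alternating-path exchange to produce a missed pair at smaller distance, a contradiction. Once each maximum matching misses a single vertex, the hypothesis that every vertex is inessential yields, for each $v$, a maximum matching missing only $v$, that is, a perfect matching of $G-v$; hence $G$ is factor-critical.

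For (b) I would apply Gallai's Lemma to each component $G_i$. The point to verify is that every vertex $v$ of $G_i$ is inessential \emph{within} $G_i$: taking a maximum matching $M$ of $G$ that misses $v$, one checks by alternating-path arguments that $M$ uses no edge joining $D$ to $C$ and no edge between two distinct components of $G[D]$, so that $M\cap E(G_i)$ is a maximum matching of $G_i$ missing $v$. The same no-leakage facts drive (a): each vertex of $A$ is essential and has a neighbor in $D$, and an exchange argument shows it must be matched into $D$, since otherwise one could free it and contradict its being essential; a parity count using that each $G_i$ is factor-critical (hence of odd order) forces distinct vertices of $A$ into distinct components and forces $C$ to be matched internally, so $M$ covers $C$. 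Finally (c) is a count: since $M$ matches $A$ into $|A|$ distinct odd components and leaves exactly one vertex uncovered in each of the remaining $q-|A|$ components, $\mathrm{def}(G)=q-|A|$; and deleting $A$ leaves exactly the $q$ odd components $G_i$ together with even components inside $C$, so $o(G-A)-|A|=q-|A|$, with $A$ attaining the Berge--Tutte maximum.

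The main obstacle is Gallai's Lemma together with the no-leakage structural claims underlying both (a) and (b), namely that a maximum matching never uses an edge from $D$ to $C$ nor an edge between distinct components of $G[D]$. These are exactly the alternating-path exchange arguments that make the restriction of $M$ to each $G_i$ maximum and pin $A$ down as an optimal barrier; the distance-minimization step inside Gallai's Lemma and the parity bookkeeping across the components of $G[D]$ are where the real care is needed.
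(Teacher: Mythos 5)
The paper does not prove this theorem; it is quoted as a classical result with citations to Gallai and Edmonds, so there is no in-paper argument to compare against. On its own terms, your outline follows the classical route: Gallai's Lemma is indeed the right engine, and your sketch of it (minimizing $\mathrm{dist}(x,y)$ over maximum matchings and missed pairs, handling $x\sim y$ by augmentation, and exchanging along an alternating path through an internal vertex $z$ of a shortest $x$--$y$ path) is the standard, correct argument.

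The gap is in the ``no-leakage'' step on which you hang both (a) and (b). The two facts you propose to verify are vacuous: there are no edges at all between $D$ and $C$ (by the very definition of $A$ as the vertices of $B$ with a neighbor outside $B$), and there are no edges between distinct components of $G[D]$ (they are components of an induced subgraph). Neither addresses the real obstruction, which is the edges between $D$ and $A$. A maximum matching $M$ of $G$ that misses $v\in V(G_i)$ will in general match other vertices of $G_i$ into $A$, so $M\cap E(G_i)$ misses those vertices too, and there is no reason for it to be a maximum matching of $G_i$; thus ``$v$ is inessential in $G$'' does not transfer to ``$v$ is inessential in $G_i$'' by restriction. (If instead you meant to claim that $M$ uses no $D$--$A$ edges, that is simply false: in $K_{1,3}$ the center is $A$, the leaves are $D$, and every maximum matching uses such an edge.) The classical repair is the Stability Lemma: for $a\in A$, the decomposition of $G-a$ is $(A-\{a\},C,D)$; iterating it reduces to $G-A$, where $D$ genuinely is separated from the rest and Gallai's Lemma applies to each $G_i$. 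That lemma is the technical heart of the theorem and is absent from your plan. Relatedly, in (a) the assertion that $M$ matches $A$ into \emph{distinct} components cannot be extracted from parity alone; parity only gives ${\rm def}(G)\ge q-|A|$, and to get equality (hence part (c) and the ``distinct components'' conclusion) you must construct a matching of deficiency $q-|A|$, which requires a Hall/K\"onig-type argument showing $A$ can be matched into distinct components of $G[D]$. As written, the proposal is the right skeleton with its two load-bearing steps missing.
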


\begin{prop}\label{GGfcrit}
In the construction of any graph $G\in\GG$, the bipartite multigraph $H$
with parts $X$ and $Y$ can be chosen so that each component of $G-X$ is
a factor-critical graph.
\end{prop}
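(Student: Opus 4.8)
The plan is to route the entire argument through the augmented cubic graph $G'$ introduced in the proof of Theorem~\ref{thm:subcubic''}, choosing the Tutte set there to be the canonical set supplied by the Gallai--Edmonds Decomposition. Starting from a graph $G\in\GG$, recall from Theorem~\ref{thm:subcubic''} that $G$ has $1$-deficit $3$, no cut-edge, and no $2$-factor. Attaching a cut-edge and a balloon at each of the three $2$-vertices produces a connected cubic graph $G'$ with no $1$-factor. I would then compute the Gallai--Edmonds Decomposition $A,C,D$ of $G'$ and take $S=A$ as the set driving the construction in Definition~\ref{GGdef}, with $X=A$.

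The key step is to verify that $S=A$ satisfies all the structural conclusions extracted in the $d=3$ case of Theorem~\ref{thm:subcubic''}. First I would observe that the edge-counting in that proof applies to every set whose odd-component surplus $o(G'-S)-|S|$ is at least $2$; applied to a set maximizing this surplus it forces the surplus to equal $2$, so ${\rm def}(G')=2$. By part (c) of the Gallai--Edmonds Structure Theorem, $o(G'-A)-|A|={\rm def}(G')=2$, so $A$ has exactly the odd-component surplus required of a Tutte set in the construction. Since $G[C]$ has a perfect matching, its components have even order, so the odd components of $G'-A$ are precisely the components of $G[D]$. Running the same edge-counting on $S=A$ then forces $A$ to be independent, forces each odd component other than the balloons to receive exactly three edges from $A$, and places the three $2$-vertices (the balloon attachment points) in $A$. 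Thus $A$ yields a valid instance of the construction.

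Finally I would read off factor-criticality from part (b) of the Gallai--Edmonds theorem. Because the balloons are joined to $X=A$ by the three cut-edges, the components of $G-X$ are exactly the odd components of $G'-A$ other than the three balloons, and each such component is induced identically in $G$ and in $G'$. By Gallai--Edmonds part (b) every component of $G[D]$ is factor-critical, and the odd components of $G'-A$ are exactly these components of $G[D]$; hence each component of $G-X$ is factor-critical, as desired.

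The main obstacle is the bookkeeping that reconciles the two descriptions of $G'-S$: the Gallai--Edmonds structure guarantees factor-critical odd components but says nothing a priori about the independence of $A$ or about the number of edges entering each component, whereas the proof of Theorem~\ref{thm:subcubic''} supplies exactly that information but only for a set whose odd-component surplus is $2$. Establishing ${\rm def}(G')=2$ is therefore the linchpin that lets the canonical set $A$ inherit both lists of properties simultaneously.
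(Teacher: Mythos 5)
Your proposal is correct and follows essentially the same route as the paper: both take the Tutte set for the auxiliary cubic graph $G'$ to be the set $A$ from its Gallai--Edmonds Decomposition, re-run the edge-counting from the $d=3$ case of Theorem~\ref{thm:subcubic''} to conclude that $C$ is empty and $A$ is independent (so $A$ can serve as $X$), and then invoke part (b) of the Gallai--Edmonds Structure Theorem to conclude that the components of $G-X$ are factor-critical. Your write-up is somewhat more explicit about the bookkeeping (e.g., establishing ${\rm def}(G')=2$ and identifying the odd components of $G'-A$ with the components of $G'[D]$), but the underlying argument is the paper's.
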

\begin{proof}
In the Gallai--Edmonds Decomposition $(A,C,D)$ of a graph not having a 
$1$-factor, the set $A$ is a Tutte set.  For the auxiliary 3-regular multigraph
$G'$ in the proof of Theorem~\ref{thm:subcubic''} when the $1$-deficit is $3$,
take the Tutte set $S$ to be $A$ in the Gallai--Edmonds Decomposition.
As argued in the proof of Theorem~\ref{thm:subcubic''}, we have $C$ empty and
$A$ independent.  By the Gallai--Edmonds Structure Theorem, with this choice of
the Tutte set and the resulting bipartite graph $H$, the set $A$ becomes $X$,
and the components of $G-X$ are factor-critical.
\end{proof}

\begin{example}\label{badgraph}
The refinements in Propositions~\ref{GGsimple} and~\ref{GGfcrit} cannot
be guaranteed simultaneously (that is, using one initial bipartite graph $H$).
An example showing this appears in Figure 1, shown in solid edges.  This is a
bipartite multigraph $G$ obtained from the complete bipartite graph $K_{2,3}$
by replacing one edge with a thread of length $3$ and then duplicating the
middle edge $xy$ of that thread to reach degree $3$ at its endpoints. 

The augmented graph $G'$ (including the dashed edges) grows a cut-edge from
each $2$-vertex and adds a balloon at the other end of each cut-edge.  Every
maximum matching in $G'$ covers all the vertices of $X$.  Using $H-\hat y$ as
the full multigraph $G$, with no vertices exploded, the components of $G-X$ are
factor-critical, but this $H$ is not simple.

The Tutte set $X$ has size $4$.  Also $X-\{x\}$ is a Tutte set.  This 
Tutte set constructs $G$ by starting with $H-\hat y=K_{2,3}$ and exploding 
one vertex of $Y$ by using the $2$-connected $3$-regular multigraph consisting
of a $4$-cycle with two opposite edges duplicated.
\end{example}
\begin{figure}[h!]
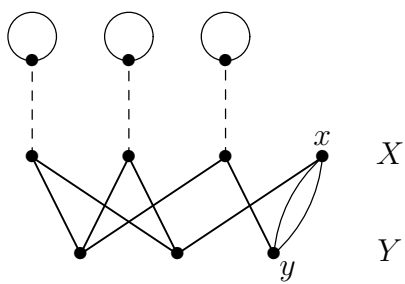

\gpic{
\expandafter\ifx\csname graph\endcsname\relax \csname newbox\endcsname\graph\fi
\expandafter\ifx\csname graphtemp\endcsname\relax \csname newdimen\endcsname\graphtemp\fi
\setbox\graph=\vtop{\vskip 0pt\hbox{%
    \graphtemp=.5ex\advance\graphtemp by 1.266in
    \rlap{\kern 0.380in\lower\graphtemp\hbox to 0pt{\hss $\bu$\hss}}%
    \graphtemp=.5ex\advance\graphtemp by 1.266in
    \rlap{\kern 0.886in\lower\graphtemp\hbox to 0pt{\hss $\bu$\hss}}%
    \graphtemp=.5ex\advance\graphtemp by 1.266in
    \rlap{\kern 1.392in\lower\graphtemp\hbox to 0pt{\hss $\bu$\hss}}%
    \graphtemp=.5ex\advance\graphtemp by 0.759in
    \rlap{\kern 0.127in\lower\graphtemp\hbox to 0pt{\hss $\bu$\hss}}%
    \graphtemp=.5ex\advance\graphtemp by 0.759in
    \rlap{\kern 0.633in\lower\graphtemp\hbox to 0pt{\hss $\bu$\hss}}%
    \graphtemp=.5ex\advance\graphtemp by 0.759in
    \rlap{\kern 1.139in\lower\graphtemp\hbox to 0pt{\hss $\bu$\hss}}%
    \graphtemp=.5ex\advance\graphtemp by 0.759in
    \rlap{\kern 1.646in\lower\graphtemp\hbox to 0pt{\hss $\bu$\hss}}%
    \special{pn 11}%
    \special{pa 380 1266}%
    \special{pa 127 759}%
    \special{fp}%
    \special{pa 127 759}%
    \special{pa 886 1266}%
    \special{fp}%
    \special{pa 886 1266}%
    \special{pa 633 759}%
    \special{fp}%
    \special{pa 633 759}%
    \special{pa 380 1266}%
    \special{fp}%
    \special{pa 1392 1266}%
    \special{pa 1139 759}%
    \special{fp}%
    \special{pa 1139 759}%
    \special{pa 380 1266}%
    \special{fp}%
    \special{pa 886 1266}%
    \special{pa 1646 759}%
    \special{fp}%
    \special{pn 8}%
    \special{ar 889 697 759 759 0.081754 0.845541}%
    \special{ar 2149 1328 759 759 -3.059838 -2.296052}%
    \graphtemp=.5ex\advance\graphtemp by 1.266in
    \rlap{\kern 2.000in\lower\graphtemp\hbox to 0pt{\hss $Y$\hss}}%
    \graphtemp=.5ex\advance\graphtemp by 0.759in
    \rlap{\kern 2.000in\lower\graphtemp\hbox to 0pt{\hss $X$\hss}}%
    \graphtemp=.5ex\advance\graphtemp by 1.337in
    \rlap{\kern 1.464in\lower\graphtemp\hbox to 0pt{\hss $y$\hss}}%
    \graphtemp=.5ex\advance\graphtemp by 0.658in
    \rlap{\kern 1.646in\lower\graphtemp\hbox to 0pt{\hss $x$\hss}}%
    \special{ar 127 127 127 127 0 6.28319}%
    \special{ar 633 127 127 127 0 6.28319}%
    \special{ar 1139 127 127 127 0 6.28319}%
    \special{pa 127 759}%
    \special{pa 127 253}%
    \special{da 0.051}%
    \special{pa 633 759}%
    \special{pa 633 253}%
    \special{da 0.051}%
    \special{pa 1139 759}%
    \special{pa 1139 253}%
    \special{da 0.051}%
    \graphtemp=.5ex\advance\graphtemp by 0.253in
    \rlap{\kern 0.127in\lower\graphtemp\hbox to 0pt{\hss $\bu$\hss}}%
    \graphtemp=.5ex\advance\graphtemp by 0.253in
    \rlap{\kern 0.633in\lower\graphtemp\hbox to 0pt{\hss $\bu$\hss}}%
    \graphtemp=.5ex\advance\graphtemp by 0.253in
    \rlap{\kern 1.139in\lower\graphtemp\hbox to 0pt{\hss $\bu$\hss}}%
    \hbox{\vrule depth1.367in width0pt height 0pt}%
    \kern 2.000in
  }%
}%
}

\caption{Graph for Example~\ref{badgraph}.}
\end{figure}

%


\begin{thebibliography}{99}
\frenchspacing

\bibitem{1965Ed}
J. Edmonds,
\newblock Maximum matching and a polyhedron with {$0,1$}-vertices.
\newblock {\em J. Res. Nat. Bur. Standards Sect. B}, 69B (1965), 125--130.

\bibitem{GEE}
J. Edmonds,
\newblock Paths, trees, and flowers.
\newblock {\em Canad. J. Math.} 17 (1965), 449--467.

\bibitem{EJ}
G. Exoo and R. Jajcay,
\newblock Dynamic cage survey.
\newblock {\em Electron. J. Combin.} 15 (2008), Dynamic Survey \#16
(updated 2013 to 55 pages).

\bibitem{GEG}
T. Gallai,
\newblock Neuer Beweis eines Tutte’schen Satzes.
\newblock {\em Magyar Tud. Akad. Mat. Kut. Int. K\"ozl.} 8 (1963), 135--139.

\bibitem{HLT}
D. Hanson, C.O.M. Loten, and B. Toft,
\newblock On interval colourings of bi-regular bipartite graphs.
\newblock {\it Ars Combin.} 50 (1998), 23--32.

\bibitem{HY}
M.A. Henning and A. Yeo,
\newblock Tight lower bounds on the size of a maximum matching in a regular
graph.
\newblock {\em Graphs and Combinatorics} 23 (2007), 647--657.

\bibitem{arxiv_KRTWZ}
A.~V. {Kostochka}, A.~{Raspaud}, B.~{Toft}, D.~B. {West}, and D.~{Zirlin},
\newblock {Cut-edges and regular factors in regular graphs of odd degree}.
\newblock June 2018, arXiv:1806.05347.

\bibitem{NP}
D. Naddef and W.~R.~Pulleyblank,
\newblock Matchings in regular graphs.
\newblock {\em Discrete Math.} 34 (1981), 283--291.

\bibitem{2010OWe}
S. O and D.~B. West,
\newblock Balloons, cut-edges, matchings, and total domination in regular
  graphs of odd degree.
\newblock {\em J. Graph Theory}, 64 (2010), 116--131.

\bibitem{2015OWe}
S. O and D.~B. West,
\newblock Sharp bounds for the {C}hinese postman problem in 3-regular graphs
  and multigraphs.
\newblock {\em Discrete Appl. Math.}, 190/191 (2015), 163--168.

\bibitem{1891Pe}
J. Petersen,
\newblock Die {T}heorie der regul\"aren graphs.
\newblock {\em Acta Math.}, 15 (1891), 193--220.

\bibitem{Ples}
J. Plesn\'{\i}k,
\newblock Connectivity of regular graphs and the existence of 1-factors.
\newblock Mat.\v{C}asopis Sloven. Akad. Vied 22 (1972), 310--318.

\bibitem{Tu}
W. T. Tutte,
\newblock The factorization of linear graphs.
\newblock {\em J. London Math. Soc.} 22 (1947), 107--111.

\end{thebibliography}
\end{document}